\title{On increasing stability in the two dimensional  inverse source scattering problem with many frequencies}
\author{ Mozhgan Nora Entekhabi and Victor Isakov}
\date{December 22, 2017}
\begin{document}
\maketitle
\newtheorem{theorem}{Theorem}[section]
\newtheorem{lemma}[theorem]{Lemma}
\newtheorem{corollary}[theorem]{Corollary}
\newtheorem{definition}[theorem]{Definition}
\newtheorem{proposition}[theorem]{Proposition}

\[
Department\; of\; Mathematics, Statistics,\; and\; Physics
\]
\[
Wichita\; State\; University
\]
\[
Wichita,\; KS\; 67260-0033,\; U.S.A.
\]
\[
e-mail:\;\; entekhabi@math.wichita.edu\;\;victor.isakov@wichita.edu\;\;
\]

{\bf Abstract}

In this paper, we will study increasing stability in the  inverse source problem for the Helmholtz equation in the plane when the source term 
is assumed to be compactly supported in a bounded domain $\Omega$ with sufficiently smooth boundary. Using the Fourier transform in the frequency domain,  bounds for the Hankel functions and for scattering solutions in the complex plane, improving bounds for the analytic continuation, and exact observability for wave equation led us to our goals which are a sharp 
uniqueness and increasing stability estimate with larger wave numbers interval.  

\vspace{10pt}

\textbf{Keywords: } Inverse scattering problems, Inverse source problems, Analytic continuation, Boundary control.

\vspace{10pt}

\textbf{Mathematics Subject Classification(2000)}: 35R30; 35J05; 35B60; 33C10; 31A15; 76Q05; 78A46

\section{Introduction.}

The inverse source problem seeks for the right hand side of a partial differential equation from boundary data. It has important applications, for example, in acoustical and  biomedical/medical imaging, antenna synthesis, geophysics, and material science \cite{ABF}, \cite{B}. 
 In this paper, we are interested in uniqueness and stability in this inverse problem for the  Helmholtz equation when the source is  supported in a bounded domain $\Omega$. As we know \cite{I} the inverse source problem does not have a unique solution at a single or at finitely many wave numbers. On the other hand, if we use  all wave numbers in $(0,K)$ one can regain uniqueness. Our goal is to establish  uniqueness for the source from the Cauchy data on any open non empty part of the boundary for arbitrary positive $K$ and increasing stability when the Cauchy data are given on the whole boundary and $K$ is getting large. 
 
Increasing stability results were obtained in \cite{BJT} by using the spatial Fourier transform. In \cite{CIL} more general and sharp results were obtained in $\mathbb{R}^3$ by the temporal Fourier transform, with a possibility of handling spatially variable coefficients. In \cite{IL} an inverse source problem with attenuation with many frequencies was studied. New difficulties in the two-dimensional case are due to the  absence of the Huygens principle and a more complicated fundamental solution compared with  the  three-dimensional case \cite{CIL}. In \cite{LY} the authors studied increasing stability for the source when  $\Omega$ is a disk.

In this paper we handle a general bounded domains $\Omega$ in the plane. The two-dimensional case has numerous applications to surface vibrations and due to the lack of the Huygens principle and relatively complicated fundamental solutions it highlights difficulties of more general situations and suggests a ways to overcome them. As in \cite{CIL}, we  use the Fourier transform in time to reduce our inverse source problem to finding the initial data in the hyperbolic value problem by  the lateral Cauchy data. We achieved this goal by obtaining improving with growing $K$ sharp bounds of the analytic continuation of the data from $(0,K)$ onto $(0,+\infty)$ and combining them with known optimal Lipschitz stability of the initial data from the lateral boundary Cauchy data for the wave equation. These bounds are obtained by using certain properties of the Hankel function instead of the exponential function as in
\cite{CIL}. The main new difficulties are handled in section 4 where we use a sharp
time decay for the plane wave equation and imposed higher (compared with \cite{CIL}) regularity assumptions on the source term to obtain needed a priori bounds on the decay of scattering solutions for larger wave numbers.
 
This paper is organized as follows. In section 2 we prove uniqueness in the inverse source problem for the Helmholtz equation. Here, we use the  analyticity of the solution $u(x,k)$ with respect to $k$ and sharp uniqueness of the continuation results for the wave equation. In section 3 by using known estimates of the Hankel functions we adjust the methods of \cite{CIL} to get exponential (with respect to the wave numbers) bounds of integrals of certain norms of  the scattering solutions for complex wave numbers. Also we recall explicit bounds of the harmonic measure of the interval $(0,K)$ in a sector of the complex plane $k=k_1 +ik_2$ and consequently an explicit bound of the analytic continuation of $u(,k)$ from interval $(0,K)$ onto the real axis. Finally, in section 4 we use the known integral representation to get decay rates for solutions to the initial value problem for the wave equation in the plane to justify the temporal Fourier transform of its solution and to link it to the scattering solutions. Moreover, we use the known recursion relations for the Hankel functions, the integration by parts and higher regularity of the source term to derive the needed time decay of scattering solutions with respect to the wave numbers.
This was easy to do in \cite{CIL} due to the Huygens principle in the three-dimensional case. Then we combine all preliminary results to derive the increasing stability bound. 

Let the radiating wave field $u(x,k)$ solve the scattering problem 
\begin{equation}
\label{PDE} 
(\Delta +k^2)u = -f_1 -ikf_0 \quad \text{in} \quad \mathbb{R} ^2,
\end{equation}
\begin{equation}
\label{radiation}
\lim r^{1/2}( \partial_{r}u-iku)=0 \quad \text{as}\quad r=|x| \rightarrow +\infty.
\end{equation}
Both $f_0, f_1 \in L^2(\Omega)$ are assumed  to be real valued and to have $supp f_0,suppf_1 \subset \Omega$ where $\Omega $ is a bounded domain with the boundary  $\partial \Omega \in C ^2 $. 
 
Our  goal is to study  uniqueness and stability  of functions $f_0,f_1$ from the data
\begin{equation}
\label{Cauchy}
u=u_0, \, \partial_ \nu u =u_1 \, \text{on} \, \Gamma, \, \text{when} \quad  K_* < k< K,
\end{equation}
where $\Gamma$ is an non empty open subset of $\partial \Omega $ with outer unit normal $\nu$ and $0< K_* < K$.

Our main result is the following

\begin{theorem} 
\label{maintheorem}
Let $\parallel f_0 \parallel _{(4)} ^2 (\Omega) +\parallel f_1 \parallel _{(3)} ^2 (\Omega) \leq M$, $1 \leq M$,
and $\delta<|x-y|, x\in\partial \Omega, y\in supp f_0 \cup supp f_1$ for some 
positive
$\delta$. 

Then there exist a constant $C=C(\Omega,\delta)$ such that
\begin{equation}
\label{Istability}
\parallel f_1 \parallel _{(0)} ^2 (\Omega) +\parallel f_0 \parallel _{(1)} ^2 (\Omega) \leq   C\Big(\epsilon ^2+\frac{M^2}{1+K^{\frac{2}{3}}E^{\frac{1}{4}}} \Big)
\end{equation}
for all $u \in H^2 (\Omega)$ solving (2.1), (2.2) with $1<K$. Here 
\begin{equation}
\label{epsilon}
\epsilon^2=\int _{0}^{K} \Big ( \omega ^2 \parallel u(,\omega) \parallel_{(0)}^{2} (\partial \Omega) +\parallel \nabla u(,\omega) \parallel_{(0)}^{2} (\partial \Omega)  \Big)d \omega, \quad 0<E=-\ln  \epsilon.
\end{equation}
\end{theorem}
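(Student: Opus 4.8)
The strategy follows the template established in \cite{CIL}: reduce the Helmholtz inverse source problem to a hyperbolic continuation problem via the temporal Fourier transform, split the frequency axis into a "low" band $(0,K)$ where the Cauchy data live and a "high" band $(K,\infty)$ controlled by energy/regularity, and interpolate using a quantitative analytic continuation estimate. Concretely, I would introduce the solution $U(x,t)$ of the wave equation $\partial_t^2 U - \Delta U = 0$ with initial data $U(\cdot,0)=f_0$, $\partial_t U(\cdot,0)=f_1$ (up to sign/normalization), whose temporal Fourier transform in $t$ recovers $u(x,k)$ solving \eqref{PDE}–\eqref{radiation}; the regularity hypotheses $f_0\in H^4$, $f_1\in H^3$ are exactly what is needed (via the section 4 time-decay estimates for the 2D wave equation) to justify this transform and to obtain the decay of $\|u(\cdot,k)\|$ in $k$ that makes the Fourier inversion legitimate on the unbounded frequency range.

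**Key steps, in order.** First, I would establish the "data side" bound: the quantity $\epsilon^2$ in \eqref{epsilon} controls, via the sharp Lipschitz stability for the lateral Cauchy problem for the wave equation (exact observability / boundary control, using $\partial\Omega\in C^2$ and the geometric separation $\delta$), the energy of $(f_0,f_1)$ restricted to frequencies in $(0,K)$ — more precisely it controls $\int_0^K(\|u(\cdot,k)\|^2_{(1)}(\Omega)+\cdots)\,dk$ type norms after translating boundary Cauchy data into interior data. Second, I would use the section 3 results: the exponential-in-$k$ bounds for scattering solutions at complex wave numbers (obtained from the Hankel-function asymptotics) plus the harmonic-measure estimate for $(0,K)$ in a sector give a quantitative analytic continuation of $k\mapsto u(x,k)$ from $(0,K)$ to a larger interval $(0,K_1)$ with $K_1\sim K^{1/3}$ roughly (this power is where the $K^{2/3}$ in the final bound originates) — the analytically continued function is bounded by $\epsilon^{1-\mu}M^{\mu}$ with $\mu$ depending on the harmonic measure, hence effectively by something like $\epsilon^{\theta(K)}$. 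Third, I would estimate the high-frequency tail: for $k>K_1$ the a priori bound $M$ together with the decay rate of $\|u(\cdot,k)\|$ in $k$ (the main new work of section 4, replacing the Huygens-based argument of \cite{CIL}) gives $\int_{K_1}^\infty(\cdots)\,dk\le M^2/K_1^{s}$ for an appropriate $s$. Fourth, I would assemble: by Parseval, $\|f_1\|^2_{(0)}+\|f_0\|^2_{(1)}$ equals an integral over all $k\in(0,\infty)$ of the relevant norms of $u(\cdot,k)$; split at $K_1$, bound the low part by the data term plus the analytic-continuation term, bound the high part by $M^2/K_1^{s}$, optimize the free parameter $K_1$ (balancing $\epsilon^{\theta(K)}$ against $M^2/K_1^s$), and track the dependence on $E=-\ln\epsilon$ to land on the stated form $C(\epsilon^2 + M^2/(1+K^{2/3}E^{1/4}))$.

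**The main obstacle.** The hard part is the high-frequency decay estimate for scattering solutions in the plane (section 4). In $\mathbb{R}^3$ the Huygens principle makes the wave solution compactly supported in $t$ for each $x$ near $\partial\Omega$, so its Fourier transform in $t$ is entire with easily controlled growth; in $\mathbb{R}^2$ there is a long-time tail decaying only polynomially, so one must extract extra decay in $k$ from smoothness of $(f_0,f_1)$. I would do this by writing the 2D wave solution through its explicit Poisson-type kernel (the $(t^2-|x-y|^2)^{-1/2}$ kernel), integrating by parts in $t$ repeatedly using the Bessel/Hankel recursion relations to trade each derivative for a power of $k^{-1}$, and invoking $f_0\in H^4$, $f_1\in H^3$ to control the resulting boundary and volume terms — the asymmetry in the required Sobolev orders of $f_0$ versus $f_1$ is a symptom of exactly this computation. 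A secondary technical point is making the harmonic-measure/analytic-continuation bound explicit enough that the exponent $\theta(K)$ produces the clean $K^{2/3}E^{1/4}$ gain rather than a messier expression; this requires choosing the sector in the complex $k$-plane with aperture tuned to the exponential growth rate coming from the Hankel bounds.
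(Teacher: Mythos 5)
Your proposal follows essentially the same route as the paper: temporal Fourier reduction to the planar wave equation, exact observability for the lateral Cauchy problem, harmonic-measure/two-constants analytic continuation of the boundary-data functionals $I_1,I_2$ in a sector of the complex $k$-plane, and a high-frequency tail bound obtained by trading Hankel recursion relations and integration by parts for powers of $k^{-1}$ under the $H^4/H^3$ regularity, exactly as in Lemma \ref{Lemma41}. The only quantitative slip is the guess $K_1\sim K^{1/3}$ for the continuation endpoint --- the paper's balancing of $e^{2(d+1)k}\epsilon^{2\mu(k)}M^2$ against the tail decay $CM^2k^{-1}$ yields the truncation $k=K^{2/3}E^{1/4}$ when $2^{1/4}K^{1/3}<E^{1/4}$ --- but since you leave this parameter free to be optimized at the end, the argument closes as in the paper.
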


Observe that \eqref{Istability} is a conditional improving stability estimate for the source term by the Cauchy data \eqref{Cauchy}. While the logarithmic term $E$ is necessary at any fixed upper
bound of the  wave numbers $K$ because the Helmholtz equation is of the elliptic type, with increasing $K$ this term goes to zero and the estimate is getting closer to the optimal Lipschitz one. 

 The following well known integral representation holds   
\begin{equation}
\label{uH}
u(x,k)= \frac{-i}{4} \int_{\Omega} H_{0} ^{(1)}(k|x-y|) (f_1(y)+ikf_0(y))dy,
\end{equation}
where $ H_{0} ^{1}(z)= \frac{1}{\pi i} \int _{1+i\infty}^{1} e^{izs} (s^2  -1)^{-1/2} ds $, for $0<\Re z$, is the Hankel function of the first kind \cite{W}. It is also can be defined as  
\begin{equation*}
H_{0}^{(1)}(z)=J_0(z)+i Y_0(z),
\end{equation*}
where 
\begin{equation*}
J_0(z)=\sum _{m=0}^{\infty} \frac{(-1)^m  (\frac{1}{2} z)^{2m}}{(m!)^2},\;
Y_0(z)=2\{\gamma +\log(\frac{1}{2}z) \}  J_0 (z) -2 \sum _{m=1}^{\infty} \frac{(-1)^m  (\frac{1}{2} z)^{2m}}{(m!)^2}\{1+ \frac{1}{2}+...+\frac{1}{m}     \},
\end{equation*}
and $\gamma = 0.5772157...$ is the  Euler's constant. Using the recurrence formula $H^{(1)}_{0} (z)= (2/z)H^{(1)}_{1} (z)-H^{(1)}_{2} (z)$, and the Weber crude inequality
\begin{equation*}
|H_{\nu}^{(1)} (z)|\leq |\big(  \frac{2}{\pi z}   \big)^{\frac{1}{2}} e^{i(z-\frac{1  }{2}\nu \pi - \frac{1}{4} \pi)}|. \Big ( 1- \frac{\nu- \frac{1}{2}}{2r} \Big) ^{(-\nu- \frac{1}{2})} 
\end{equation*}
when $\nu-\frac{1}{2}>0$ (see \cite{W}, chapter VII, p.210),
the following well known bound for Hankel function of the first kind  is obvious: 
\begin{equation}
\label{H0bound}
|H_{0}^{(1)} (z)|  \leq |\big(  \frac{2}{\pi z}   \big)^{\frac{1}{2}} e^{i(z-\frac{3 \pi}{4})}|. \Big ( \frac{2}{r}  \big( 1- \frac{1}{4r} \big)^{-\frac{3}{2}}  + \big ( 1- \frac{3}{4r} \big)^{-\frac{5}{4}} \Big ).
\end{equation}
when $ 0< \Re z$, $ \frac{3}{4}<r$, $r=|z|$ .

 In what follows $C$ denote generic constants depending on the domain $\Omega $ and on $\delta$, any additional dependence will be indicated in parenthesis.  $||u ||_{(l)} (\Omega)$ denotes the standard norm in Sobolev space $H^{l}(\Omega)$.
 
\section{Uniqueness of source identification}

In this section we will demonstrate the uniqueness of the source under minimal assumptions.

\begin{theorem}
\label{uniqueness}
 Let $u$ be a solution to the scattering problem \eqref{PDE}, \eqref{radiation} with $f_0\in H^1(\Omega), f_1\in L^2(\Omega)$.
 If the Cauchy data $u_0=u_1 =0$ on $\Gamma$ when $k\in (K_*,K) $, then $f_0=f_1=0$ in $\Omega$.
\end{theorem}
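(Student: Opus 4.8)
The plan is to exploit analyticity in the wave number $k$ together with a known sharp uniqueness-of-continuation result for the wave equation. First I would observe that the integral representation \eqref{uH} shows that $u(x,k)$ extends to an entire function of $k\in\mathbb{C}$ for each fixed $x$ outside $\overline{\Omega}$ (indeed $H_0^{(1)}(k|x-y|)$ is analytic in $k$ away from $k=0$, and the only singularity of $H_0^{(1)}$ at the origin is logarithmic, which is integrable against the $L^2$ densities $f_1+ikf_0$ over the bounded set $\Omega$; near $k=0$ one can check directly that $u(x,k)$ stays analytic since the $ik f_0$ factor kills the would-be pole). Consequently, by the identity theorem, the hypothesis $u_0=u_1=0$ on $\Gamma$ for $k\in(K_*,K)$ propagates to \emph{all} $k\in(0,\infty)$: the Cauchy data of $u(\cdot,k)$ vanish on $\Gamma$ for every positive wave number.

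Next I would pass to the time domain. Define $U(x,t)$ as the solution of the wave equation $\partial_t^2 U-\Delta U = 0$ (for $t>0$, away from the source) with initial data $U(\cdot,0)=f_0$, $\partial_t U(\cdot,0)=f_1$ — this is the standard correspondence under which the temporal Fourier transform of $U$ solves \eqref{PDE}, \eqref{radiation}, and is exactly the reduction announced in the introduction and developed in Section 4. (For the uniqueness theorem one only needs that the Fourier transform in $t$ intertwines the two problems; one does not yet need the quantitative decay estimates.) Since $\hat U(x,k)=u(x,k)$ for $x$ near $\Gamma$ and all real $k$, vanishing of the Cauchy data of $u$ on $\Gamma$ for all $k>0$ (and, by the reality of $f_0,f_1$, the corresponding conjugate symmetry gives it for all real $k$) forces $U=\partial_\nu U=0$ on $\Gamma\times(0,T)$ for every $T>0$.

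Now I invoke sharp uniqueness of continuation for the wave equation: lateral Cauchy data vanishing on $\Gamma\times(0,T)$ for $T$ large enough relative to the diameter of $\Omega$ (and here we may take $T=\infty$) implies $U\equiv 0$ in $\Omega\times(0,\infty)$ — this is the Tataru–Robbiano–Zuily–Hörmander type theorem, valid precisely because $\partial\Omega\in C^2$ and $\Gamma$ is a nonempty open piece. Evaluating $U$ and $\partial_t U$ at $t=0$ yields $f_0=0$ and $f_1=0$ in $\Omega$.

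The main obstacle is the analytic-continuation step at small $k$: one must verify carefully that $u(x,k)$, defined a priori as a radiating solution for $k>0$, genuinely extends to an entire (not merely meromorphic) function of $k$ near $k=0$, so that the identity theorem applies across the whole half-line rather than only on $(K_*,\infty)$. This is where the structure of the source term $f_1+ikf_0$ matters and where the logarithmic behavior of $H_0^{(1)}$ near zero must be controlled; the representation \eqref{uH} makes this transparent but it is the one place where the two-dimensional fundamental solution behaves differently from the three-dimensional one. The remaining ingredients — the Fourier-transform correspondence and the sharp uniqueness of continuation — are by now standard and are exactly the tools the introduction promises to use.
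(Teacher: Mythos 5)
Your overall strategy (analytic continuation in $k$ followed by a time-domain unique continuation argument) is the one the paper uses, but the final step has a genuine gap. From vanishing lateral Cauchy data on $\Gamma\times(0,\infty)$ alone, the Holmgren--John/Tataru theorem does \emph{not} give $U\equiv 0$ in all of $\Omega\times(0,\infty)$: by finite speed of propagation a disturbance starting at distance $\rho$ from $\Gamma$ cannot be seen on $\Gamma$ before time $\rho$, and the sharp uniqueness region is only $\{(x,t):\dist(x,\Gamma)<t\}$, which misses an entire neighborhood of $\Omega\times\{0\}$ away from $\Gamma$. So you cannot simply ``evaluate $U$ and $\partial_t U$ at $t=0$''; you obtain $U=0$ only on $\Omega\times(T,\infty)$ for $T$ exceeding $\sup_{x\in\Omega}\dist(x,\Gamma)$, and you then need a backward-in-time argument to descend to $t=0$. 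That backward step requires boundary data on \emph{all} of $\partial\Omega\times(0,T)$, which your free-space $U$ does not come with. The paper closes exactly this gap in two moves you omit: first, by uniqueness in the Cauchy problem for the elliptic operator $\Delta+k^2$ in $\mathbb{R}^2\setminus\bar\Omega$, the zero Cauchy data on $\Gamma$ force $u(\cdot,k)=0$ on the whole exterior, hence on all of $\partial\Omega$ (then extended to $0<\Re k$ by analyticity); second, it introduces the auxiliary solution $U_0$ of the initial--boundary value problem in $\Omega$ with zero Dirichlet condition on $\partial\Omega\times(0,\infty)$, identifies its Fourier--Laplace transform $u^*$ with $u$ for $\Im k>0$, and, after Holmgren--John gives $U_0=0$ on $\Omega\times(T,\infty)$, uses the well-posedness of the \emph{backward} Dirichlet initial--boundary value problem on $\Omega\times(0,T)$ to reach $t=0$. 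Your argument as written never acquires data on $\partial\Omega\setminus\Gamma$ and therefore cannot perform this descent.

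Two smaller points. The worry about analyticity of $u(\cdot,k)$ across $k=0$ is unnecessary: one only needs the identity theorem in the connected open set $\{0<\Re k\}$, where \eqref{uH} is manifestly analytic, to propagate the vanishing from $(K_*,K)$ to $(0,\infty)$; no extension through the origin is required. Conversely, your assertion that the identification of the temporal Fourier transform of $U$ with $u(\cdot,k)$ for real $k$ requires no quantitative decay is too optimistic in two dimensions: because of the failure of Huygens' principle, $U(x,t)$ decays only algebraically in $t$, and justifying the Fourier transform on the real axis is precisely the content of the first half of Section 4. The paper's uniqueness proof sidesteps this by working with the Fourier--Laplace transform at $\Im k>0$, where the linear-in-$t$ energy bound \eqref{energy} already gives absolute convergence, and by carrying out the identity-theorem argument in the upper half-plane.
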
 
 
\begin{proof}
Denote by $U_0$ the solution to the following hyperbolic problem 
\begin{equation}
\label{U0}
\partial ^{2}_{t}U_0 -\Delta U_0 = 0 \quad \text{on} \quad \Omega \times(0,\infty),
\end{equation}
\begin{equation*}
U_0 =-f_0,\, \partial _t U_0 =f_1 \quad \text{on} \quad \Omega \times \{0\,\}, \, U_0=0 \quad \text{on} \quad \partial \Omega \times (0,+\infty).
\end{equation*}
Under our assumptions, there is a unique solution to the problem \eqref{U0} with
 \begin{equation}
 \label{energy}
 \parallel U_0 (,t)\parallel _{(1)}(\Omega) + \parallel \partial _t U_0 (,t)\parallel _{(0)}(\Omega) 
 \leq C (1+t)(\parallel f_0\parallel _{(1)}(\Omega) +\parallel f_1\parallel _{(0)}(\Omega)  ),
 \end{equation}
 which easily follows from the standard energy integral for the wave equation.

Now let 
\begin{equation*}
u^*(x,k)= \int _{0}^{\infty} U_0(x,t)e^{ikt} dt
\end{equation*}
 Due to \eqref{energy}, the function $u^* (x,k)$ is well defined and analytic with respect to $k=k_1+ik_2, k_2>0$ . Applying the integration by parts and using standard properties of the Fourier-Laplace transform we conclude that
\begin{equation}
\label{u*}
(\Delta + k^2 ) u^* = -f_1-ikf_0 \quad \text{in} \quad \Omega,\quad u^* =0 \quad \text{on} \quad \partial \Omega.
\end{equation}

Due to our assumption, the function $u$ solves the same Dirichlet problem for $\Delta +k^2$ when $0<k_1, 0<k_2$. Indeed, $u$ solves the homogeneous Helmholtz 
equation in $\mathbb{R}^2\setminus \bar\Omega$ and has zero Cauchy data on $\Gamma$. By the uniqueness in the Cauchy problem for elliptic equations 
$u=0$ on $\mathbb{R}^2\setminus \bar\Omega$ and hence on $\partial\Omega$ provided $K_*<k<K$. As follows from the integral representation \eqref{uH} and \eqref{H0bound}
the function $u( ;k)$ is (complex)analytic when $0<\Re k$, hence $u( ;k)=0$ on $\partial \Omega$ provided
$0< \Re k$. Since $k_2>0$, the solution of \eqref{u*} is unique, hence $u=u^*$ on $\Omega $ (see section 4). Consequently, we obtain $u^* =u =0$, $ \partial _\nu u^*= \partial _ \nu u = 0 $ on $\Gamma$. Since $u^*$ is an analytic function, we can conclude that $u^* =0, \partial_ \nu u^* =0  $ on $\Gamma$ for all $k=k_1+ i k_2$ with $k_2>0$. Due to the uniqueness of the inversion of the Fourier-Laplace transform 
\begin{equation*}
\partial _ \nu U_0=0 \quad \text{on} \quad \Gamma \times (0,\infty). 
\end{equation*}
Due to the uniqueness in the lateral Cauchy problem for the wave equation  \eqref{U0} with  the Cauchy data on $\Gamma \times (0,+\infty )$ (Holmgren-John theorem \cite{I}, Section 3.4), we can conclude that $U_0 =0$ on $\Omega \times (T,+\infty)$ for some positive $T$. Hence from the uniqueness in the backward initial boundary value problem for the hyperbolic equation \eqref{U0}  in $\Omega \times (0,T)$ with zero boundary data on $\partial \Omega \times (0,T)$ and initial data at $\Omega \times \{T\}$ we conclude that $U_0 =0  $ on $\Omega \times (0,T)$. So $-U_0(,0) = f_0 =0, \partial _t U(,0)=f_1=0 $ on $\Omega$ which finishes the proof of uniqueness.

\end{proof}

Observe that this proof can be
immediately generalized onto arbitrary elliptic equations of the second order with variable coefficients which coincides with the Helmholtz equation outside $\Omega$.
Moreover, with some
modifications it can be used to demonstrate the uniqueness
of the source in the more complicated case when the equation \eqref{PDE} holds true only in $\Omega$ and the radiation condition is replaced by some homogeneous elliptic boundary condition
on $\partial\Omega$.

\section{Increasing stability of the continuation to high frequencies}

We introduce the functions
$I_1(k), I_2(k)$ as follows: 
\begin{equation}
\label{I1}
 \int _{-\infty} ^{+\infty} \omega ^2 \parallel u(,\omega) \parallel^{2}_{(0)}(\partial \Omega) d\omega = I_1 (k)+ \int _{k<|\omega|}  \omega ^2 \parallel u(,\omega) \parallel^{2}_{(0)}(\partial \Omega) d\omega,
\end{equation}
\begin{equation}
\label{I2}
 \int _{-\infty} ^{+\infty} \omega ^2 \parallel \nabla u(,\omega) \parallel^{2}_{(0)}(\partial \Omega) d\omega = I_2 (k)+ \int _{k<|\omega|}  \omega ^2 \parallel \nabla u(,\omega) \parallel^{2}_{(0)}(\partial \Omega) d\omega.
\end{equation}

The purpose of this section is to obtain some upper bounds for these functions of $k$ in a sector of the complex plane. These bounds are needed to get bounds of the analytic continuation of 
$I_1(k), I_2(k)$ from $(0,K)$
(where they are less than $\epsilon^2$) onto larger interval of wave numbers in order to bound the left hand sides of \eqref{I1},\eqref{I2}. Due to the Parseval's identity these left hand sides are norms of the lateral boundary Cauchy
 data for the wave equation which are needed to bound the initial data for this equation, or the source term for the Helmholtz equation \eqref{PDE}.  

Using \eqref{uH}, 
\begin{equation*}
I_1= 2\int _0 ^ k \omega ^2 \int _{ \partial \Omega} \big(\int _{\Omega} \frac{-i}{4} (f_1(y)+i\omega f_0(y))H_{0} ^{(1)}(\omega|x-y|) dy \big) 
\end {equation*}
\begin{equation}
\label{I1i}
\big(   \int _{\Omega} \frac{i}{4} (f_1(y)-i\omega f_0(y)) \overline{H_{0} ^{(1)}}(\omega|x-y|) dy  \big)d\Gamma (x)d\omega,
\end{equation}
\begin{equation*}
I_2= 2\int _0 ^ k \omega ^2 \int _{ \partial \Omega} \big(\int _{\Omega} \frac{-i}{4}  (f_1(y)+i\omega f_0(y))\nabla _x H_{0} ^{(1)}(\omega|x-y|) dy \big)
\end{equation*}
\begin{equation}
\label{I2i}
\big(   \int _{\Omega} \frac{i}{4} (f_1(y)-i\omega f_0(y)) \nabla_x \overline{H_{0} ^{(1)}}(\omega|x-y|) dy  \big)d\Gamma (x)d\omega.
\end{equation}
where $k>0$ and $ \overline{H_{0} ^{(1)}}(z)=J_0(z)-i Y_0(z)$, provided $z\in(0,\infty)$.

In the increasing stability estimates we utilize the  norm
\eqref{epsilon} of the Cauchy data which is $I_1(K)+ I_2(K)$. Since for $k\in (K,\infty)$ the data would be unknown, the truncation level of $k$ in $I_1(k), I_2(k)$ will keep balance between the value $\epsilon$ and the unknown information.
The integrands in \eqref{I1i},
\eqref{I2i}  are analytic functions of $\omega$, hence $I_{1}(k), I_{2}(k)$ are  analytic functions of $k$ in $\mathbb{C}\backslash(-\infty,0]$ .

 \begin{lemma}
 \label{Ibounds} 
 Let $f_1 \in H^1 (\Omega), f_0 \in H^1(\Omega)$ and $supp f_0, suppf_1  \subset \Omega $. Then
\begin{equation}
\label{boundI1}
|I_1 ( k) | \leq  \frac{\pi}{2} |\partial \Omega|d\Big(  \frac{1}{3} |k|^3  \parallel f_1\parallel_{(0)}^2  (\Omega)+   
\frac{1}{5} |k|^5  \parallel f_0 \parallel_{(0)}^2  (\Omega)  \big)\frac{e^{2d|k_2|}}{k_{1}},
\end{equation}
 \begin{equation}
 \label{boundI2}
|I_2 ( k) | \leq  \frac{\pi}{2} |\partial \Omega|d\Big(  \frac{1}{3} |k|  \parallel f_1\parallel_{(1)}^2  (\Omega)+  
 \frac{1}{5} |k|^3 \parallel f_0 \parallel_{(1)}^2  (\Omega)  \big)\frac{e^{2d|k_2|}}{k_{1}},
\end{equation}
 where $|\partial \Omega|$ is the length of $\partial \Omega$, $d=sup|x-y|$ over $x, y \in \Omega$.
 \end{lemma}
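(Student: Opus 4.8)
\medskip\noindent The plan is to convert the identities \eqref{I1i}--\eqref{I2i} into estimates by deforming the $\omega$-contour, bounding the inner $y$-integrals by Cauchy--Schwarz against uniform bounds for the Hankel kernels, and finally integrating in $x$ and in $\omega$. Since, as already observed, the integrand of the inner integral in \eqref{I1i}--\eqref{I2i} is holomorphic in $\omega$ on $\mathbb{C}\setminus(-\infty,0]$, and $k$ lies in that slit plane whenever $k_1=\Re k>0$, I would first replace the contour $[0,k]$ by the radial segment $\gamma=\{\,tk:0\le t\le1\,\}$, which changes neither $I_1(k)$ nor $I_2(k)$. Along $\gamma$ one has $|\omega|=t|k|$, $0<\Re\omega=tk_1$ and $|\Im\omega|=t|k_2|\le|k_2|$, so that $|\Im(\omega|x-y|)|=|x-y|\,|\Im\omega|\le d|k_2|$ for all $x\in\partial\Omega$, $y\in\Omega$, because $|x-y|\le d=\sup_{\Omega\times\Omega}|x-y|$; this is where the factor $e^{2d|k_2|}$ comes from.

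Next I would use a bound for the kernels valid for every $z$ with $\Re z>0$: from the integral representation of $H_0^{(1)}$ recorded after \eqref{uH}, from \eqref{H0bound}, and from the power series of $H_0^{(1)}$ near the origin (which removes the restriction $|z|>3/4$ in \eqref{H0bound}), one gets $|H_0^{(1)}(z)|\le C|z|^{-1/2}e^{|\Im z|}$, and the identical bound for $\overline{H_0^{(1)}}=H_0^{(2)}$. With $z=\omega|x-y|$ along $\gamma$, Cauchy--Schwarz in $y$ on each inner integral in \eqref{I1i}, together with the planar Newtonian estimate $\int_\Omega|x-y|^{-1}\,dy\le\int_{B(x,d)}|x-y|^{-1}\,dy=2\pi d$ (valid since $\Omega\subset B(x,d)$ for $x\in\partial\Omega$), bounds $\int_\Omega|H_0^{(1)}(\omega|x-y|)|^2\,dy$ by $Cd\,e^{2d|k_2|}/\Re\omega$ and similarly with $\overline{H_0^{(1)}}$. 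Hence the modulus of the product of the two inner integrals in \eqref{I1i} is at most $\tfrac1{16}\,\|f_1+i\omega f_0\|_{(0)}(\Omega)\,\|f_1-i\omega f_0\|_{(0)}(\Omega)\cdot Cd\,e^{2d|k_2|}/\Re\omega$, and the parallelogram identity followed by the arithmetic--geometric mean inequality give $\|f_1+i\omega f_0\|_{(0)}(\Omega)\,\|f_1-i\omega f_0\|_{(0)}(\Omega)\le\tfrac12\big(\|f_1+i\omega f_0\|_{(0)}^2(\Omega)+\|f_1-i\omega f_0\|_{(0)}^2(\Omega)\big)=\|f_1\|_{(0)}^2(\Omega)+|\omega|^2\|f_0\|_{(0)}^2(\Omega)$; this is precisely the step that eliminates the mixed $\|f_0\|\|f_1\|$ contribution and leaves the combination appearing in \eqref{boundI1}. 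Integrating over $x\in\partial\Omega$ (the length $|\partial\Omega|$) and over $\gamma$ (the integral $\int\omega^2(\Re\omega)^{-1}d|\omega|$ against $\|f_1\|_{(0)}^2(\Omega)+|\omega|^2\|f_0\|_{(0)}^2(\Omega)$ producing the powers $|k|^3,|k|^5$ together with the factor $1/k_1$ coming from the decay in $\Re\omega$), and collecting the absolute constants ($2$ from the evenness of $\omega^2\|u(\cdot,\omega)\|^2$ in $\omega$, $\tfrac1{16}=|{-i}/4|^2$, and the normalizations of the Hankel bound and of the Newtonian integral), one arrives at \eqref{boundI1}.

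For \eqref{boundI2} the same scheme is used, but a direct Cauchy--Schwarz in $y$ is no longer legitimate: $\nabla_xH_0^{(1)}(\omega|x-y|)=-\omega\,H_1^{(1)}(\omega|x-y|)(x-y)/|x-y|$ has an $|x-y|^{-1}$ singularity, which is not square integrable in the plane. Here I would use the hypothesis $\operatorname{supp}f_0,\operatorname{supp}f_1\subset\Omega$ to integrate by parts: since $\nabla_xH_0^{(1)}(\omega|x-y|)=-\nabla_yH_0^{(1)}(\omega|x-y|)$ and the resulting boundary integral vanishes, $\int_\Omega(f_1+i\omega f_0)(y)\,\nabla_xH_0^{(1)}(\omega|x-y|)\,dy=\int_\Omega(\nabla f_1+i\omega\nabla f_0)(y)\,H_0^{(1)}(\omega|x-y|)\,dy$, and likewise for the companion factor. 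This reduces the estimate for $I_2$ to the one already carried out for $I_1$ with $f_j$ replaced by $\nabla f_j$; since $\|\nabla f_j\|_{(0)}(\Omega)\le\|f_j\|_{(1)}(\Omega)$, one obtains \eqref{boundI2}.

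The step I expect to be the main obstacle is precisely this first-order kernel in $I_2$: in $\mathbb{R}^2$, because there is no Huygens principle and $\nabla H_0^{(1)}$ is genuinely non-$L^2_{\mathrm{loc}}$ (unlike the three-dimensional situation of \cite{CIL}), the integration by parts -- hence the combined use of the support condition and of $f_j\in H^1(\Omega)$ -- is unavoidable, and one must verify that the boundary term really vanishes and keep careful track of the powers of $\omega$. A secondary technical point is making the Hankel bound uniform down to $z=0$, where \eqref{H0bound} degenerates and the series expansion must be invoked, and extracting the factor $1/k_1$ cleanly from the $\omega$-integration along $\gamma$.
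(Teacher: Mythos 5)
Your proposal follows essentially the same route as the paper's proof: the radial parametrization $\omega=ks$, $s\in(0,1)$, the bound $|H_0^{(1)}(z)|\le e^{|\Im z|}(\Re z)^{-1/2}$ for $\Re z>0$, Cauchy--Schwarz in $y$ combined with $\int_\Omega|x-y|^{-1}dy\le 2\pi d$, integration in $s$ to produce $|k|^3/3$ and $|k|^5/5$, and for $I_2$ the identity $\nabla_xH_0^{(1)}=-\nabla_yH_0^{(1)}$ plus integration by parts using the compact support to shift the derivative onto $f_0,f_1$. The only cosmetic difference is your use of the parallelogram identity where the paper uses the pointwise inequality $(a+b)^2\le 2(a^2+b^2)$; both yield the same combination $\|f_1\|_{(0)}^2+|\omega|^2\|f_0\|_{(0)}^2$.
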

 
 \begin{proof}
  The Hankel function $H^{(1)}_{0}(z)$ can be bounded  by a simple change of variable as follows (see \cite{LY}, Lemma 3.2),
\begin{equation*}
|H^{(1)}  _{0}(z)| \leq \frac{e^{|Imz|}}{(Rez)^{1/2}}, \quad |\overline {H^{(1)}  _{0}(z)}| \leq \frac{e^{|Imz|}}{(Rez)^{1/2}}
\end{equation*}
for all $z$ with $0< \Re z$, so we can conclude that
$$
|H^{(1)}  _{0}(k|x-y|)| \leq \frac{e^{|k_2||x-y|}}{(k_1|x-y|)^{1/2}}.$$
 Now, 
  using the parametrization $\omega =ks, s\in (0,1)$, in the line integral we obtain the  inequality:
 \begin{equation*}
 |I_1 (k) |\leq \frac{1}{8} \int_{0}^{1}|k|^{3} s^2 \Big( \int _{\partial \Omega } \Big( \int _{\Omega} \big(|f_1 (y)|+ s|k||f_0 (y)| \big)   \frac{e^{|k_2||x-y|}}{(k_1|x-y|)^{1/2}}dy \Big)^2 d\Gamma(x) \Big)ds
 \end{equation*} 
 \begin{equation*}
 \leq \frac{1}{4}  \int_{0}^{1}|k|^{3} s^2 \int _{\partial \Omega } \Big( \int _{\Omega} \big(|f_1 (y)|^2+ s^2 |k|^2|f_0 (y)|^2 \big)dy\Big)\Big(     \int_{\Omega} \frac{e^{2|k_2 |d}}{k_{1} |x-y|}dy \Big) d\Gamma(x)ds, 
 \end{equation*}
 where the Schwartz inequality is used for the integral with respect to $y$. 
  Using the polar coordinate $\rho=|y-x|$ centered at $x$ we will obtain
 \begin{equation*}
|I_1(k)|\leq \frac{1}{4}  |k|^{3}\int_{0}^{1} s^2 \int _{\partial \Omega } \Big( \int _{\Omega} \big(|f_1 (y)|^2+ s^2 |k|^2|f_0 (y)|^2 \big)dy\Big)\Big(2\pi     \int_{0}^{d} \frac{e^{2|k_2|d}}{k_{1} }d\rho \Big) d\Gamma(x)ds 
 \end{equation*}
 Integrating with respect to $s, x,$ and $\rho$, we complete the proof of \eqref{boundI1}.\\
 
 To obtain the  bound \eqref{boundI2}, we will use that 
 \begin{equation*}
 \nabla _{x} H^{(1)}_0(k|x-y|) = -\nabla _{y} H^{(1)}_0(k|x-y|).
 \end{equation*}
Then \eqref{boundI2} can be proved integrating by parts in the integral over domain $\Omega$ and by argument similar to the case of \eqref{boundI1}.
 \end{proof}
 
The rest of this section will provide argument which is essential in order to connect $I_1  (k) $ and $I_2  (k)$ for $k\in [K,\infty)$ to $I_1(K),
I_2(K)$. Let us introduce the sector $S=\{ k \in \mathbb{C}:|$arg$\, k| < \frac{\pi}{4}    \}$. It is easy to see that $ |k|\leq  k_1   $ for any $k \in S$. Hence we conclude

\begin{equation*}
|I_1 (k) e^{-2(d+1)k}|\leq C \Big(  |k _1| ^2 \parallel f_1 \parallel_{(0)}^2  (\Omega)+    |k_1| ^{4} \parallel f_0 \parallel _{(0)}^2  (\Omega)  \Big ) e^{-2k_1} \leq C M^2,
\end{equation*}
when $ \parallel f_1\parallel_{(0)}^2  (\Omega)+ \parallel f_0 \parallel_{(0)}^2  (\Omega)\leq M^2$.
Using \eqref{epsilon}, it is obvious that
\begin{equation*}
|I_1 (k) e^{-2(d+1)k}|\leq  \epsilon ^2 \text{ \, on \, } [0,K],  
\end{equation*}
The following will provide a connection between $I_1(k)$ and $\epsilon$ for any $k\in [K,+\infty]$.

Let $\mu (k) $ be the harmonic measure of the interval $[0,K]$ in $S\backslash [0,K] $, then 
\begin{equation}
\label{I1epsilon}
|I_1 (k) e^{-2(d+1)k}|\leq C \epsilon ^{2\mu (k)} M^2 ,  
\end{equation}
where  $K<k<\infty$.

With a similar argument we obtain 
\begin{equation}
\label{I2epsilon}
|I_2 (k) e^{-2(d+1)k}|\leq C \epsilon ^{2\mu (k)} M^2.
\end{equation}

In order to find a lower bound for $\mu (k)$ we need the following lemma .

\begin{lemma} 
 \label{Lemma32}
Let $\mu (k)$ be the harmonic measure of $[0,K]$ in $S \backslash [0,K]$.
 Then  
\begin{equation*}
\begin{cases}
    \frac{1}{2} \leq \mu(k)  \qquad  \qquad \qquad \qquad \text{if} \qquad 0<k<2^{\frac{1}{4}}K ,\\
   \frac{1}{\pi} \big( (\frac{k}{K})^4   - 1\big) ^{- 1/2} \leq \mu (k) \, \quad \text{if} \qquad  \quad  2^{\frac{1}{4}}K<k.
   \end{cases}
\end{equation*}
\end{lemma}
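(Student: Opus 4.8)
The plan is to compute $\mu(k)$ \emph{exactly} by a short chain of conformal maps and then read off the two estimates from elementary inequalities for $\arctan$.

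First I would unfold the sector. Since $S$ has opening $\pi/2$, the map $w=k^{2}$ is a biholomorphism of $S\setminus[0,K]$ onto $H\setminus[0,K^{2}]$, where $H=\{\Re w>0\}$; it carries the two banks of the slit $[0,K]$ to the two banks of $[0,K^{2}]$, the two bounding rays of $S$ to the imaginary axis, and a real number $k>K$ to $k^{2}>K^{2}$. Rescaling, $v=w/K^{2}$ gives $H\setminus[0,1]$ with marked point $\lambda:=(k/K)^{2}>1$. Next I would use the slit-unfolding map $\zeta=\sqrt{v^{2}-1}$ with the principal branch of the square root (legitimate since $v^{2}-1\in\mathbb{C}\setminus(-\infty,0]$ on $H\setminus[0,1]$): it is a biholomorphism of $H\setminus[0,1]$ onto $H$ which sends the two banks of $[0,1]$ onto the segment $i[-1,1]$, the imaginary axis onto $\{is:|s|\ge 1\}$, and $\lambda$ to $\sqrt{\lambda^{2}-1}=\sqrt{(k/K)^{4}-1}>0$. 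A final rotation $z=i\zeta$ turns $H$ into the upper half-plane, carrying $i[-1,1]$ to $[-1,1]$ and the marked point to $z_{0}=i\sqrt{(k/K)^{4}-1}$.

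By conformal invariance of harmonic measure and the classical formula for the harmonic measure of an interval in a half-plane (the measure of $[-1,1]$ at $iY$ is $\frac1\pi$ times the angle $\pi-2\arctan Y$ subtended by $[-1,1]$ at $iY$), this yields, for real $k>K$,
\[
\mu(k)=1-\frac{2}{\pi}\arctan\sqrt{(k/K)^{4}-1},
\]
while $\mu(k)=1$ for $0<k\le K$ since then the point lies on the slit.

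It then remains to extract the two cases. If $0<k<2^{1/4}K$ then $(k/K)^{4}-1<1$, so $\arctan\sqrt{(k/K)^{4}-1}<\arctan 1=\pi/4$ and $\mu(k)>\tfrac12$ (and $\mu(k)=1\ge\tfrac12$ for $k\le K$). If $2^{1/4}K<k$, put $Y=\sqrt{(k/K)^{4}-1}\ge 1$; using $\arctan Y+\arctan(1/Y)=\pi/2$ we get $\mu(k)=\tfrac{2}{\pi}\arctan(1/Y)$, and since $\arctan$ is concave on $[0,1]$ we have $\arctan x\ge\tfrac{\pi}{4}x\ge\tfrac12 x$ there, so with $x=1/Y\in(0,1]$, $\mu(k)\ge\tfrac{2}{\pi}\cdot\tfrac{1}{2Y}=\tfrac1\pi\big((k/K)^{4}-1\big)^{-1/2}$, which is the claimed bound. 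The one step requiring care is the conformal bookkeeping in the middle: checking that $\zeta=\sqrt{v^{2}-1}$ is one-to-one on $H\setminus[0,1]$ and that it sends precisely the (two-banked) slit onto the whole segment $i[-1,1]$ while the outer rays go to the complement $\{is:|s|\ge1\}$; once that boundary correspondence is nailed down the rest is routine. (One could instead bypass the exact formula and argue by domain monotonicity after the same reductions, but the explicit $\mu(k)$ is cleaner and exhibits the sharpness of the constants $2^{1/4}$ and $\pi$.)
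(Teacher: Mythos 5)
Your proof is correct: the chain of conformal maps $k\mapsto v=k^{2}/K^{2}\mapsto \zeta=\sqrt{v^{2}-1}\mapsto z=i\zeta$ does carry $S\setminus[0,K]$ biholomorphically onto the upper half\mbox{-}plane with the two-banked slit going onto $[-1,1]$ and the point $k>K$ to $i\sqrt{(k/K)^{4}-1}$, the resulting exact formula $\mu(k)=\tfrac{2}{\pi}\arctan\bigl(((k/K)^{4}-1)^{-1/2}\bigr)$ is right, and the elementary concavity bound $\arctan x\ge\tfrac{\pi}{4}x\ge\tfrac12 x$ on $[0,1]$ yields precisely the two stated estimates, with equality $\mu=\tfrac12$ at $k=2^{1/4}K$. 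The paper gives no proof of this lemma, only a citation to \cite{CIL}, where the argument is essentially the same conformal-mapping computation, so your write-up correctly supplies the details the paper omits.
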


A proof  can be found in \cite{CIL}.

\section{Increasing stability for the inverse source problem}

We consider the hyperbolic initial value problem
\begin{equation*}
(\partial ^2 _ t  - \Delta )U=0 \text{ in } \mathbb{R}^2\times (0,\infty),
\end{equation*}
\begin{equation}
\label{initialhyperbolic}
U(,0)=-f_0, \quad \partial_t U (,0)=f_1 \text{ on }  \mathbb{R}^2.
\end{equation}
 Its solution $U$ has the following well-known integral representation \cite{J}:
 \begin{equation}
 \label{Uintegral}
U(x,t)= \frac{1}{2\pi}  \iint_{ |x-y|<t} \frac{f_1 (y)}{\sqrt[]{t^2 - r^2}}dy + 
\partial _t \Big(   \frac{1}{2\pi} \iint_{|x-y|<t} \frac{-f_0 (y)}{\sqrt[]{t^2 - r^2}}dy \Big), 
\end{equation}
where $r=|x-y|$. We define $U(x,t)=0 $ when $t<0$. We claim that the solution of \eqref{PDE}, \eqref{radiation} coincides with the Fourier transform of $U(x,t)$ with respect to $t$. In order to show it, the following steps are essential.

For $k_2\geq 0$, we define 
\begin{equation}
\label{u1}
u_1(x,k)=\int_{0}^{\infty} U(x,t)e^{ikt} dt.
\end{equation}
Observe that, $U(x,t)=0$ when $t< |x|-d_1$, where $d_1= \underset{y \in \Omega}{ sup|y|}$, due to the unit speed of the propagation.  

First let us assume that $f_1=0$. Using the triangle inequality, $|x-y| \leq |x|+|y| \leq |x|+d_1$, for $y\in \Omega $, so we conclude that $t^2-|x-y|^2\geq t^2 -(|x|+d_1)^2$. Let   $T=|x|+d_1+1$ and consider the following representation 
\begin{equation}
\label{u1split}
u_1 (x,k)= \int _{0}^{T} U(x,t)e^{ikt} dt + 
\int _{T}^{\infty} \partial _t \Big(   \iint_{ \Omega} \frac{-f_0 (y)}{\sqrt[]{t^2 - r^2}}dy \Big)e^{ikt} dt.
\end{equation}
Due to the properties of $U$, in particular to \eqref{energy}, the first integral of the right hand-side of \eqref{u1split} is well defined.

If $t>T$, the domain of integration of the second integral in \eqref{u1split} with respect to $y$, is $\Omega$ and hence independent of $t$, 
function $\frac{-f_0 (y)}{\sqrt[]{t^2 -r^2}}$ is integrable with respect to $y$ and when $0\leq k_2$ we have 
\begin{equation}
\label{boundk}
|\frac{\partial }{\partial t}\big (\frac{ f_0 (y)}{(t^2 - r^2)^{\frac{1}{2}}} \big ) e^{ikt}       | = |\frac{te^{ikt} f_0 (y)}{(t^2 - r^2)^{\frac{3}{2}}}|\leq \frac{ t |f_0 (y)|}{(t^2 -(|x|+ d_1 )^2)^{\frac{3}{2}}} ,
\end{equation}
which is integrable function of $t$ over $(T,+\infty)$. When $0\leq k_2$, using the Lebesgue's Dominated Convergence Theorem  we yield
\begin{equation}
\label{partialt}
\int _{T}^{\infty} \partial _t \Big(   \iint_{ \Omega} \frac{-f_0 (y)}{\sqrt[]{t^2 - r^2}}dy \Big)e^{ikt} dt=   \int _{T}^{\infty}    \iint_{ \Omega} \frac{t e^{ikt} f_0 (y)}{(t^2 - r^2)^{\frac{3}{2}}} dy dt.   
\end{equation}

Combining these two cases, we conclude that $u_1$ is well-defined for $0\leq k_2$. 

Calculating the $L^2$ norm of $u_1(x,k)$ when $k_2>0$, we conclude that $u_1$ exponentially decays when $|x|$ is getting large. Indeed if $ d_1< R$, we have
\begin{equation*}
  \int _{B(R+2)\backslash B(R)}|u_1|^2  dx = \int _{B(R+2)\backslash B(R)}|   \int_{|x|-d_1}^{\infty} U(x,t)e^{ikt} dt |^2 dx \leq
\end{equation*}
\begin{equation*}
 \int _{B(R+2)\backslash B(R)}  (\int _{R-d_1}^{\infty}|U(x,t)|^2e ^{-k_2 t}dt  \int _{R-d_1}^{\infty}e ^{-k_2 t}dt )dx  \leq
\end{equation*}
\begin{equation}
\label{u1bound}
\frac{1}{k_2}e^{-k_2(R-d_1)}
\int_ {R-d_1}^{\infty}\Big(   \int _{{\mathbb R}^2}|U(x,t)|^2 dx\Big) e^{-k_2t}dt
 \leq C (k_2)\|f_0\|^2_{(0)}(\Omega)  e^{-k_2R},
\end{equation}
where we used the H\"lder inequality in the integral with respect to $t$ and \eqref{energy}.

A similar bound can be obtained for $\nabla u_1$.

If $f_0=0$, integrating by parts  and using \eqref{Uintegral} we obtain 
$$
u_1(x,k)=-\frac{1}{k} \int _{0}^{\infty} \partial_t U(x,t) e^{ikt} dt.     
$$
 Utilizing again \eqref{Uintegral} we handle this case exactly as before. From the linearity of $U(x,t)$ with respect to $f_0$ 
and $f_1$  we obtain the general case.

Using \eqref{u1} and the integration by parts, we yield
\begin{equation*}
(\Delta +k^2)u_1=\int_{0}^{\infty} (\partial_{t}^{2} +k^2)Ue^{ikt} dt=-f_1 -ikf_0  \quad \text{in}  \quad \mathbb{R}^2.
\end{equation*}

Consider the function $v=u_1-u$. It is easy to see that $(\Delta +k^2)v=0$ and also due to the decay of the Hankel function for large $|x|$ when $k_2>0$ and 
\eqref{u1bound}, $v(x)$ , $\nabla v(x)$ decay exponentially  for large $|x|$ when $k_2>0$. Let us choose a cut off function $\chi \in C^{\infty}(\mathbb{R}^2)$, $|\nabla \chi| \leq C $ which is $\chi =1$ when $|x|< R$, $ \chi =0$ when $R+1<|x|$ and $0\leq  \chi \leq 1$. We have
\begin{equation*}
\int _{B(R+1) } (\Delta  (\chi v) + k^2 (\chi v) ) \chi \overline{v}=\int _{B(R+1)  } \big (  \chi (\Delta  +k^2)v +2\nabla \chi \nabla v + \Delta \chi v        \big) \chi \overline{v}=
\end{equation*}
\begin{equation}
\label{product}
 \int _{B(R+1) \backslash B(R) } \big (2\nabla \chi \nabla v + \Delta \chi v        \big) \chi \overline{v}.
\end{equation}
On the other hand, from the Green formula
\begin{equation}
\label{Green}
\int _{B(R+1) } (\Delta  (\chi v) + k^2 (\chi v) ) \chi \overline{v}= - \int _{B(R+1)} \nabla \chi v.\nabla \chi \overline{v}+ k^2 \int _{B(R+1) } \chi ^2 v \overline{v}.
\end{equation}
Taking the imaginary part of \eqref{product}, \eqref{Green} when $k_2 >0 $, we obtain  
\begin{equation*}
\mathfrak{Im} \int _{B(R+1)  \backslash B(R)} \big( 2\nabla \chi \nabla v + \Delta \chi v       \big) \chi \overline{v} =2k_1 k_2 \int _{B(R+1)  } \chi ^2 |v|^2 .
\end{equation*}
Due to an exponential decay of $v$ for large $R$, the limit of left hand side is $0$. Hence  $v=0$ on $\mathbb{R}^2$  for $k_2>0$, and $u_1(,k)=u(,k)$ when $0<k_2$.
From \eqref{uH} we can see that $u( k)$ is complex analytic in $\{k: 0< k_1\}$.
Using again the Lebesgue Dominated Convergence Theorem and \eqref{u1split}, \eqref{boundk},
and \eqref{partialt} we conclude that $u_1( ,k)$ is continuous with respect to $k_2, 0\leq k_2$. Finally, passing to the limit in     $u_1(,k)=u(,k)$ when $k_2\rightarrow 0$ we obtain 
this equality when $k>0$, or,
in view of \eqref{u1},
\begin{equation}
\label{uF}
u(x,k)= \int_{0}^{\infty} U(x,t)e^{ikt} dt, \;\text{when}\; k>0.
\end{equation}

\begin{lemma} 
\label{Lemma41}
Let function $u$ be a solution to the forward problem \eqref{PDE}, \eqref{radiation} with $f_1 \in H^3 (\Omega)$ and $f_0 \in H^4 (\Omega)$, $supp f_1, supp f_0 \subset \Omega $.

 Then 
\begin{equation*}
\int_{k<|\omega|} \omega ^2 \parallel u(,\omega) \parallel^2 _{(0)} (\partial \Omega) d\omega+ \int    _{k<|\omega|}    \parallel \nabla u(,\omega) \parallel^{2}_{(0)}(\partial \Omega) d\omega 
\leq 
\end{equation*}
\begin{equation}
\label{uomega}
C k ^{-1}\Big( \parallel f_0 \parallel ^2  _{(4)} (\Omega)  +\parallel f_1 \parallel ^2  _{(3)} (\Omega)  \Big).
 \end{equation}
\end{lemma}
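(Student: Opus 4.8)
\noindent\emph{Proof strategy.} I would transfer the estimate to the time variable via the identity $u(\cdot,\omega)=\int_{0}^{\infty}U(\cdot,t)e^{i\omega t}\,dt$ proved just above (with $U$ the solution of \eqref{initialhyperbolic}, represented by \eqref{Uintegral}), integrate by parts twice in $t$, and estimate the resulting temporal integral separately for $t$ near $0$ and for $t$ large. Since $U$ is real, $\|u(\cdot,-\omega)\|_{(0)}(\partial\Omega)=\|u(\cdot,\omega)\|_{(0)}(\partial\Omega)$ and likewise for $\nabla u$, so the integrals over $\{k<|\omega|\}$ are twice those over $(k,\infty)$. As $\operatorname{supp}f_{0}\cup\operatorname{supp}f_{1}$ is compact in $\Omega$, finite propagation speed gives $U(x,t)=0$ for $x\in\partial\Omega$ and $0\le t<\operatorname{dist}(x,\operatorname{supp}f_{0}\cup\operatorname{supp}f_{1})$; hence $U(x,\cdot)$, $\partial_{t}U(x,\cdot)$, $\nabla_{x}U(x,\cdot)$, $\nabla_{x}\partial_{t}U(x,\cdot)$ all vanish near $t=0$ for $x\in\partial\Omega$ (note $\nabla f_{0}=\nabla f_{1}=0$ on $\partial\Omega$), while \eqref{Uintegral} shows $U(x,t)=O(t^{-1})$, $\partial_{t}U(x,t)=O(t^{-2})$ as $t\to\infty$, so no boundary terms occur at $t=\infty$. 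Two integrations by parts in $t$ then give, for $x\in\partial\Omega$ and $\omega>0$,
\begin{equation*}
u(x,\omega)=-\frac{1}{\omega^{2}}\int_{0}^{\infty}\partial_{t}^{2}U(x,t)e^{i\omega t}\,dt,\qquad
\nabla u(x,\omega)=-\frac{1}{\omega^{2}}\int_{0}^{\infty}\nabla_{x}\partial_{t}^{2}U(x,t)e^{i\omega t}\,dt .
\end{equation*}
With $\Phi(x)=\int_{0}^{\infty}|\partial_{t}^{2}U(x,t)|\,dt$ and $\Psi(x)=\int_{0}^{\infty}|\nabla_{x}\partial_{t}^{2}U(x,t)|\,dt$ this yields $|u(x,\omega)|\le\omega^{-2}\Phi(x)$ and $|\nabla u(x,\omega)|\le\omega^{-2}\Psi(x)$, hence $\int_{k<|\omega|}\omega^{2}\|u(\cdot,\omega)\|_{(0)}^{2}(\partial\Omega)\,d\omega\le 2k^{-1}\|\Phi\|_{(0)}^{2}(\partial\Omega)$ and $\int_{k<|\omega|}\|\nabla u(\cdot,\omega)\|_{(0)}^{2}(\partial\Omega)\,d\omega\le Ck^{-3}\|\Psi\|_{(0)}^{2}(\partial\Omega)$. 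Since $k\ge1$, it is enough to prove
\begin{equation*}
\|\Phi\|_{(0)}(\partial\Omega)+\|\Psi\|_{(0)}(\partial\Omega)\le C\bigl(\|f_{0}\|_{(4)}(\Omega)+\|f_{1}\|_{(3)}(\Omega)\bigr),
\end{equation*}
and by Minkowski's integral inequality this reduces to bounding $\int_{0}^{\infty}\|\partial_{t}^{2}U(\cdot,t)\|_{(0)}(\partial\Omega)\,dt$ and $\int_{0}^{\infty}\|\nabla_{x}\partial_{t}^{2}U(\cdot,t)\|_{(0)}(\partial\Omega)\,dt$.

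I would split these at $t=T_{0}$, where $T_{0}$ depends only on $\Omega$ and is so large that $\{\,|x-y|<t\,\}\supset\overline{\Omega}$ for $x\in\partial\Omega$, $t\ge T_{0}$. On the tail $t\ge T_{0}$ the $y$--domain in \eqref{Uintegral} is the fixed set $\Omega$; using the equation $\partial_{t}^{2}U=\Delta_{x}U$ and differentiating under the integral one computes $\Delta_{x}(t^{2}-|x-y|^{2})^{-1/2}=2(t^{2}-|x-y|^{2})^{-3/2}+3|x-y|^{2}(t^{2}-|x-y|^{2})^{-5/2}$, which together with its $t$-- and $x$--derivatives is $O(t^{-3})$ uniformly for $x\in\partial\Omega$, $y\in\Omega$, $t\ge T_{0}$; hence $|\partial_{t}^{2}U(x,t)|+|\nabla_{x}\partial_{t}^{2}U(x,t)|\le Ct^{-3}(\|f_{0}\|_{(0)}(\Omega)+\|f_{1}\|_{(0)}(\Omega))$, an integrable bound, so the tail contributions are $\le C(\|f_{0}\|_{(0)}(\Omega)+\|f_{1}\|_{(0)}(\Omega))$. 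On $0<t<T_{0}$ I would use that $w:=\partial_{t}^{2}U$ solves the free wave equation in ${\mathbb R}^{2}$ with Cauchy data $(w(\cdot,0),\partial_{t}w(\cdot,0))=(-\Delta f_{0},\Delta f_{1})\in H^{2}({\mathbb R}^{2})\times H^{1}({\mathbb R}^{2})$; conservation of the energies of $w$ and of each $\partial_{x_{j}}w$, together with the at most linear growth in $t$ of the lower-order $L^{2}$ norms, gives $\|w(\cdot,t)\|_{(2)}(\Omega)\le C(\|f_{0}\|_{(4)}(\Omega)+\|f_{1}\|_{(3)}(\Omega))$ for $0\le t\le T_{0}$. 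By the trace theorem (here $\partial\Omega\in C^{2}$) one gets $\|w(\cdot,t)\|_{(0)}(\partial\Omega)+\|\nabla_{x}w(\cdot,t)\|_{(0)}(\partial\Omega)\le C\|w(\cdot,t)\|_{(2)}(\Omega)$, and integrating over $(0,T_{0})$ with the Cauchy--Schwarz inequality in $t$ bounds the near contributions by $C(\|f_{0}\|_{(4)}(\Omega)+\|f_{1}\|_{(3)}(\Omega))$ as well. Adding the two ranges gives \eqref{uomega}.

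The main obstacle is the tail $t\ge T_{0}$: in the plane the wave does not vanish behind the light cone, so --- in contrast to the three--dimensional case of \cite{CIL}, where the Huygens principle makes the temporal integral effectively finite --- one must extract from \eqref{Uintegral} enough decay in $t$ (here $O(t^{-3})$, obtained after the two $t$--differentiations) to keep that integral convergent, and one has to check that the far-field expansion of $(t^{2}-|x-y|^{2})^{-1/2}$ indeed produces this rate. The secondary, more routine point is the bookkeeping showing that controlling the $L^{2}(\partial\Omega)$ trace of $\nabla_{x}\partial_{t}^{2}U(\cdot,t)$ costs exactly the regularity $f_{0}\in H^{4}(\Omega)$, $f_{1}\in H^{3}(\Omega)$ assumed in Theorem \ref{maintheorem}.
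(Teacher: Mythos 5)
Your proposal is correct in substance, but it proves the lemma by a genuinely different route than the paper. The paper stays entirely in the frequency domain: it inserts the Hankel representation \eqref{uH}, passes to polar coordinates $r=|x-y|$ centered at $x\in\partial\Omega$, and uses the recursion $\omega^{-1}\partial_r\bigl(r^{\nu}H_{\nu}^{(1)}(r\omega)\bigr)=r^{\nu}H_{\nu-1}^{(1)}(r\omega)$ together with repeated integration by parts in $r$ to trade each factor $\omega^{-1}$ for one radial derivative of $f_0,f_1$; the crude bound $|H_{\nu}^{(1)}(z)|\le C$ for $1<z$ and $\int_{k}^{\infty}\omega^{-2}\,d\omega=k^{-1}$ then finish the first term, and the gradient term is reduced to it by writing $\nabla_xH_0^{(1)}=-\nabla_yH_0^{(1)}$ and integrating by parts in $y$, which is where the extra derivative and the $H^{4}\times H^{3}$ regularity enter. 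You instead work in the time domain through \eqref{uF}: two integrations by parts in $t$ produce the same $\omega^{-2}$ gain, and the price is paid in controlling $\int_0^{\infty}\|\partial_t^2U(\cdot,t)\|_{(0)}(\partial\Omega)\,dt$ and its spatial gradient, which you do by energy conservation plus the trace theorem for $t\le T_0$ and by the explicit $O(t^{-3})$ decay of $\Delta_x(t^2-r^2)^{-1/2}$ for $t\ge T_0$. The derivative count comes out the same, and your route makes transparent both where the regularity is spent and how the absence of the Huygens principle is compensated by kernel decay; the paper's route avoids any large-time analysis beyond what was already needed to justify \eqref{uF}. Two points to tidy up: (i) for real $\omega$ the integral in \eqref{uF} is only conditionally convergent, since $U(x,t)=O(t^{-1})$ is not absolutely integrable, so you should either perform the two integrations by parts at $k_2>0$, where everything converges absolutely, and then let $k_2\downarrow 0$ in the twice-integrated expression, or work with $\lim_{T\to\infty}\int_0^T$ and verify that the boundary terms $U(x,T)$ and $\partial_tU(x,T)$ vanish, which your decay rates do provide; (ii) your bound for the gradient term is $Ck^{-3}\|\Psi\|_{(0)}^{2}(\partial\Omega)$, which yields \eqref{uomega} only for $k$ bounded away from $0$ (you assume $k\ge1$) --- this is harmless, as the paper's own argument likewise needs $1<\delta\omega$ to invoke the boundedness of $H_{\nu}^{(1)}$, and the lemma is applied only with $k\ge K>1$.
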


\begin{proof}

Using the following well known formula for the Hankel function \cite{W}, p 74, 5): 
\begin{equation*}
z\frac{dH_{\nu}^{(1)}(z)}{dz}+\nu H_{\nu}^{(1)}(z)=zH_{\nu-1}^{(1)}(z), 
\text{or} 
\frac{d}{dz}(z^{\nu}H_{\nu}^{(1)}(z))=z^{\nu}H_{\nu-1}^{(1)}(z), \nu=1,2,...
\end{equation*}
we yield
\begin{equation}
\label{Hrecursion}
\omega^{-1}
\partial_r(r^{\nu}H_{\nu}^{(1)}(r\omega))=r^{\nu}H_{\nu-1}^{(1)}(r\omega), \nu=1,2,...
\end{equation}
Let $ d=diam\Omega$, using
the integral representation  \eqref{uH}, the polar coordinates in the plane $\theta$, $r=|y-x|$ originated at $x\in \partial \Omega$,
\eqref{Hrecursion}, and the integration by parts, we obtain: 
 \begin{equation*}
16 \int_{k<|\omega|} \omega ^2 \parallel u(,\omega) \parallel^2 _{(0)} (\partial \Omega) d\omega
= 
\end{equation*}
\begin{equation*}
 \int _{k}^{\infty}  \omega ^2 \int _{ \partial \Omega} \Big | \int _{0}^{2\pi} d\theta \int_{ \delta} ^{d} H_{0} ^{(1)}(r\omega) (f_1 +i\omega f_0 )rdr     \Big|^{2}d\Gamma(x)d\omega=
\end{equation*}
\begin{equation*}
  \int _{k}^{\infty}  \omega ^2 \int _{ \partial \Omega} \Big | \int _{0}^{2\pi} d\theta \int_{ \delta} ^{d}    \frac { 
 \partial_r (r H_{1} ^{(1)}(r\omega) )}{ \omega  }( f_1 +i \omega f_0 )dr     \Big|^{2}d\Gamma(x)d\omega=
 \end{equation*}
  \begin{equation*}
  \int _{k}^{\infty}  \omega ^2 \int _{ \partial \Omega} \Big | \int _{0}^{2\pi} d\theta \int_{ \delta} ^{d} r^2  H_{1} ^{(1)}(r\omega) \frac{1}{r} \partial_r(\frac{1}{\omega} f_1 + i f_0 )dr     \Big|^{2}d\Gamma(x)d\omega=
\end{equation*}
\begin{equation*}
 \int _{k}^{\infty}  \omega ^2 \int _{ \partial \Omega} \Big | \int _{0}^{2\pi} d\theta \int_{ \delta} ^{d} \partial_r (r^2  H_{2} ^{(1)}(r\omega) )\frac{1}{r}\partial_r(\frac{1}{\omega^2}f_1 +\frac{i}{\omega} f_0 )dr     \Big|^{2}d\Gamma(x)d\omega=
 \end{equation*}
 \begin{equation*}
  \int _{k}^{\infty}  \omega ^2 \int _{ \partial \Omega} \Big | \int _{0}^{2\pi} d\theta \int_{ \delta} ^{d} \Big(\frac{ r  H_{2} ^{(1)}(r\omega) }{\omega^2}\partial_r\big (\frac{1}{r}(\partial _r f_1)\big) +\frac{i r^3  H_{2} ^{(1)}(r\omega) }{r\omega} \partial_r\big (\frac{1}{r}(\partial _rf_0 ) \big)\Big)dr     \Big|^{2}d\Gamma(x)d\omega=
 \end{equation*}
   \begin{equation*}
 \int _{k}^{\infty}  \omega ^2 \int _{ \partial \Omega} \Big | \int _{0}^{2\pi} d\theta \int_{ \delta} ^{d} \Big(\frac{ r  H_{2} ^{(1)}(r\omega) }{\omega^2}\partial_r\big (\frac{1}{r}(\partial _r f_1)\big) -\frac{i r^3}{\omega^2}H^{(1)}_3(r\omega) \partial_r (\frac{1}{r} \partial_r\big (\frac{1}{r}(\partial _rf_0 )) \big)\Big)dr     \Big|^{2}d\Gamma(x)d\omega,
 \end{equation*}
 where we used again \eqref{Hrecursion} with $\nu=3$ and the integration by parts in the term containing $i$. 
Changing back to the Cartesian coordinates and using  that $|H_{\nu} ^{(1)} (z)| \leq C$ when $1<z,\nu =1,2,3$ (due to
the inequality before \eqref{H0bound}) we yield  
\begin{equation*}
\int_{k<|\omega|} \omega ^2 \parallel u(,\omega) \parallel^2 _{(0)} (\partial \Omega) d\omega \leq
\end{equation*}
\begin{equation*}
 C \int _{k}^{\infty}  \omega^{-2} \int _{ \partial \Omega} \Big | \int _{\Omega}   \big(  \sum _{ |\alpha |\leq 2 } |\partial ^{\alpha}f_1|  + \sum _{ |\alpha |\leq 3 } |\partial ^{\alpha}f_0|\big)dy\Big|^2d\Gamma(x)d\omega \leq
\end{equation*}
\begin{equation}
\label{uomega1}
C k^{-1}\Big (  \parallel f_1  \parallel_{(2)}^{2}(\Omega)+\parallel f_0  \parallel_{(3)}^{2}(\Omega)  \Big).
\end{equation}

Now consider 
\begin{equation*}
\int_{k<|\omega|}  \parallel \nabla u(,\omega) \parallel^2 _{(0)} (\partial \Omega) d\omega = 
\end{equation*}
\begin{equation*}
=\frac{1}{16}\int_{k<|\omega|} \omega ^2 \int _{\partial \Omega } |  \int _{\Omega}   \nabla_x  H_{0} ^{(1)}(\omega|x-y|) (f_1(y)+i\omega f_0(y))dy     \Big|^{2}d\Gamma(x) d\omega. 
\end{equation*}
It is easy to see that $  \nabla_x   H_{0} ^{(1)}(\omega|x-y|)= - \nabla_y   H_{0} ^{(1)}(\omega|x-y|)$, and integrating by parts we conclude that
\begin{equation*}
\int_{k<|\omega|}  \parallel \nabla u(,\omega) \parallel^2 _{(0)} (\partial \Omega) d\omega =
\end{equation*}
\begin{equation*}
\frac{1}{16}\int_{k<|\omega|} \omega ^2 \int _{\partial \Omega } |  \int _{\Omega}     H_{0} ^{(1)}(\omega|x-y|)(\nabla_y (f_1(y)+i\omega f_0(y)))dy     \Big|^{2}d\Gamma(x) d \omega. 
\end{equation*}
Following the argument to get  \eqref{uomega1}  we complete the proof of \eqref{uomega}.

\end{proof}

The next lemma gives a bound for the initial data $f_0$ and $f_1$ by the lateral boundary data of the hyperbolic initial value problem.

\begin{lemma} 
Let $U$ be a solution to \eqref{initialhyperbolic} with $f_1 \in L^2 (\Omega), f_0 \in H^1 (\Omega) $ with $\delta<|x-y|$ when $ x\in\partial\Omega, y\in supp f_0 \cup suppf_1$ and $T = 2 diam \Omega+2$.

Then  there is $C$ such that
\begin{equation}
\label{fUbound}
\parallel f_0 \parallel _{(1)} ^2 (\Omega) +\parallel f_1 \parallel _{(0)} ^2 (\Omega)
 \leq C \Big ( \parallel \partial _t U  \parallel _{(0)} ^2 (\partial \Omega \times (0,T))+ \parallel \nabla U \parallel _{(0)} ^2 (\partial \Omega \times (0,T))      \Big )
\end{equation}
\end{lemma}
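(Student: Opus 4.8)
The plan is to obtain \eqref{fUbound} from the known sharp (Lipschitz) stability estimate for the lateral Cauchy problem for the wave equation --- equivalently, the exact observability of the initial energy by the Cauchy data on $\partial\Omega\times(0,T)$ when $T$ is large --- after one short preliminary reduction that replaces the Dirichlet trace of $U$ on $\partial\Omega\times(0,T)$ by $\partial_tU$.

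First I would restrict $U$ to $\bar\Omega\times[0,T]$. By the energy estimate \eqref{energy} and the standard hidden regularity of solutions of the wave equation, the traces $\partial_tU$ and $\nabla U$ belong to $L^2(\partial\Omega\times(0,T))$, so the right-hand side of \eqref{fUbound} is finite, and $U$ solves $\partial_t^2U-\Delta U=0$ in $\Omega\times(0,T)$ with $U(\cdot,0)=-f_0$, $\partial_tU(\cdot,0)=f_1$. The hypothesis $\delta<|x-y|$ for $x\in\partial\Omega$, $y\in\mathrm{supp}\,f_0\cup\mathrm{supp}\,f_1$ forces $f_0$ and $f_1$ to vanish in a $\delta$-neighbourhood of $\partial\Omega$; in particular $f_0\in H_0^1(\Omega)$ with $\parallel f_0\parallel_{(1)}(\Omega)\le C\parallel\nabla f_0\parallel_{(0)}(\Omega)$, and $U(\cdot,0)=-f_0=0$ on $\partial\Omega$. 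Hence for $x\in\partial\Omega$, $0<t<T$ one has $U(x,t)=\int_0^t\partial_tU(x,s)\,ds$, and the Cauchy--Schwarz inequality gives
\begin{equation*}
\parallel U\parallel_{(0)}^2(\partial\Omega\times(0,T))\le T^2\,\parallel\partial_tU\parallel_{(0)}^2(\partial\Omega\times(0,T)).
\end{equation*}
Since the tangential derivative along $\partial\Omega$ of the trace of $U$ is a component of $\nabla U|_{\partial\Omega}$, combining this with $\parallel\partial_tU\parallel_{(0)}^2(\partial\Omega\times(0,T))+\parallel\nabla U\parallel_{(0)}^2(\partial\Omega\times(0,T))$ shows that the full lateral Cauchy data of $U$, measured in the mixed norm $\parallel U\parallel_{(1)}^2(\partial\Omega\times(0,T))+\parallel\partial_\nu U\parallel_{(0)}^2(\partial\Omega\times(0,T))$, is bounded by $C$ times the right-hand side of \eqref{fUbound}.

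Second, I would invoke the sharp stability estimate for the lateral Cauchy problem for the wave equation in $\Omega\times(0,T)$ (see \cite{I}, Section~3.4, and \cite{CIL}): because $\partial\Omega\in C^2$, the observation is taken on the whole boundary, and the time $T=2\,\mathrm{diam}\,\Omega+2$ exceeds $2\,\mathrm{diam}\,\Omega$ --- which already suffices for the multiplier method / global hyperbolic Carleman estimate --- there is a constant $C=C(\Omega,\delta)$ with
\begin{equation*}
\parallel U(\cdot,0)\parallel_{(1)}^2(\Omega)+\parallel\partial_tU(\cdot,0)\parallel_{(0)}^2(\Omega)\le C\Big(\parallel U\parallel_{(1)}^2(\partial\Omega\times(0,T))+\parallel\partial_\nu U\parallel_{(0)}^2(\partial\Omega\times(0,T))\Big).
\end{equation*}
Substituting $U(\cdot,0)=-f_0$, $\partial_tU(\cdot,0)=f_1$ on the left and the bound from the first step on the right yields \eqref{fUbound}.

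The step I expect to need the most care is the second one: locating a reference for the lateral Cauchy stability that applies under exactly the present hypotheses and with a genuinely Lipschitz (not merely H\"older or logarithmic) constant. The choice $T=2\,\mathrm{diam}\,\Omega+2$ together with the separation $\delta$ of the supports from $\partial\Omega$ is precisely what makes this estimate hold down to the initial time $t=0$: it guarantees pseudoconvexity, with respect to the wave operator, of a weight of the form $|x-x_0|^2-\theta(t-T/2)^2$ with $x_0\in\bar\Omega$ and $\theta<1$ close to $1$ on a space-time region whose slice at $t=0$ still covers $\mathrm{supp}\,f_0\cup\mathrm{supp}\,f_1$, while none of these supports reach the edges $\partial\Omega\times\{0,T\}$ where the weighted estimate degenerates. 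Alternatively one can bypass weights and use the classical multiplier identity for full-boundary observability together with the hidden-regularity trace inequality to reduce to the homogeneous Dirichlet problem.
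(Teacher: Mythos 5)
Your proposal is correct and follows essentially the same route as the paper: the paper's proof is exactly your ``alternative'' --- it splits $U=U_0+U_1$ with $U_1$ carrying the Dirichlet trace of $U$ and zero initial data, bounds $\partial_\nu U_1$ by $\parallel U\parallel_{(1)}(\partial\Omega\times(0,T))$ via the hidden-regularity estimate of Lasiecka--Lions--Triggiani \cite{LLT}, applies Ho's exact observability \cite{H} to $U_0$, and then, just as in your first step, uses $U(\cdot,0)=-f_0=0$ on $\partial\Omega$ (from the support separation $\delta$) and a Poincar\'e-type inequality in $t$ to absorb $\parallel U\parallel_{(1)}(\partial\Omega\times(0,T))$ into the right-hand side of \eqref{fUbound}. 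The only difference is that you quote the resulting Lipschitz lateral-Cauchy stability as a black box (also sketching a Carleman-weight proof), whereas the paper assembles it explicitly from these two cited ingredients.
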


\begin{proof}
Let $U_1$ be the solution to the following initial boundary value problem 
\begin{equation*}
\partial ^{2}_{t} U_1 - \Delta U_1 =0 \quad  \text{in}  \quad \Omega \times (0,T), 
\end{equation*}
\begin{equation*}
U_1 =\partial _t U_1=0 \quad \text{on} \quad \Omega \times \{0\}, U_1 =U \quad \text{on} \quad \partial \Omega \times (0,T),
\end{equation*}
where $U$ is a solution to \eqref{initialhyperbolic}. As known \cite{LLT},
\begin{equation}
\label{fU0bound}
\parallel \partial _ {\nu} U_1 \parallel _{(0)}^{2}(\partial \Omega \times (0,T)) 
\leq C \big ( \parallel U\parallel _{(1)} ^{2} ( \partial \Omega \times (0,T)) \big).
\end{equation}
The function $U_0= U-U_1$ solves the initial boundary value problem 
\begin{equation*}
\partial ^{2}_{t} U_0 - \Delta U_0 =0 \quad  \text{in}  \quad \Omega \times (0,T), 
\end{equation*}
\begin{equation*}
U_0 =\partial _t U_0=0 \quad
\text{on}\quad \Omega \times \{0\}, U_0 =0 \quad \text{on} \quad \partial \Omega \times (0,T).
\end{equation*}
According to \cite{H}, 
\begin{equation*}
\parallel f_0 \parallel _{(1)} ^2 (\Omega) +\parallel f_1 \parallel _{(0)} ^2 (\Omega) \leq C  \parallel \partial _{\nu} U_0  \parallel _{(0)} ^2 (\partial \Omega \times (0,T)   ).
\end{equation*}
Since $U_0= U-U_1$,
\begin{equation*}
\parallel \partial _{\nu} U_0  \parallel _{(0)} ^2 (\partial \Omega \times (0,T)) \leq 2 \parallel \partial _{\nu} U  \parallel _{(0)} ^2 (\partial \Omega \times (0,T)) + 2\parallel \partial _{\nu} U_1  \parallel _{(0)} ^2 (\partial \Omega \times (0,T)) 
\end{equation*}
\begin{equation*}
\leq \parallel \partial _{\nu} U  \parallel _{(0)} ^2 (\partial \Omega \times (0,T)) + C  \parallel  U \parallel_{(1)} ^2 (\partial \Omega \times (0,T)),
\end{equation*}
due to \eqref{fU0bound}. Since $U=0$ on $\partial \Omega \times \{0\} $, by the Poincare inequality, $\parallel U \parallel_{(1)} ^2 (\partial \Omega \times (0,T))$ is bounded by the right side in \eqref{fUbound}, and we complete the proof.

                                                                                                                                                                                                        \end{proof}
\hspace{-0.5cm}\textbf{Remark 4.1.} Obviously, the following inequality holds
\begin{equation*}
\int _{k<|\omega|} \omega ^2 \parallel u(,\omega) \parallel_{(0)}^{2} (\partial \Omega)d\omega \leq k^{-2} \int _{k<|\omega|} \omega ^4  \parallel u(,\omega) \parallel_{(0)}^{2} (\partial \Omega)d\omega \leq
\end{equation*}
\begin{equation}
 k^{-2} \int _{R} \omega ^4  \parallel u(,\omega) \parallel_{(0)}^{2} (\partial \Omega)d\omega=2\pi k^{-2} \int _{R} \parallel \partial ^{2 }_{t} U(,t) \parallel_{(0)}^{2} (\partial \Omega)dt
\end{equation}
by the Parseval's identity.

Finally, we are ready to establish the  increasing stability estimate of Theorem 1.1.

\begin{proof}
Without loss of generality, we can assumed $\epsilon <1$ and $\pi (d+1)E^{-\frac{1}{4}}<\frac{1}{2}$, otherwise the bound \eqref{Istability} is straightforward. We let 
     \begin{equation}
     \label{k}
 k= K^{\frac{2}{3}}E^{\frac{1}{4}} \quad \text{if} \quad 2^{\frac{1}{4}}K^{\frac{1}{3}}< E ^{\frac{1}{4}},\quad
 \text{and}\quad
 k=K \quad \text{if}\quad E ^{\frac{1}{4}} \leq 2^{\frac{1}{4}}K^{\frac{1}{3}}.
       \end{equation} 

When $\quad E ^{\frac{1}{4}} \leq 2^{\frac{1}{4}}K^{\frac{1}{3}}$, then $k=K$ and 
\begin{equation}
\label{I11}
|I_1 (k)| \leq 2\epsilon ^2.
\end{equation}

If $2^{\frac{1}{4}}K^{\frac{1}{3}}< E ^{\frac{1}{4}}$, then from \eqref{k}, Lemma 3.2 and \eqref{I1epsilon} we obtain 
\begin{equation*}
|I_1(k)|\leq e^{2(d+1)k}e^{\frac{-2E}{\pi}\big( (\frac{k}{K})^4  -1 \big)^{\frac{-1}{2}}} CM^2\leq
\end{equation*}
\begin{equation*}
 CM^2 e^{2(d+1)K^{\frac{2}{3}}E^{\frac{1}{4}}-\frac{-2E}{\pi}(\frac{K}{k})^2 }=CM^2 e^{\frac{-2}{\pi}K^{\frac{2}{3}}E^{\frac{1}{2}}\big(1-\pi(d+1)E^{-\frac{1}{4}}\big)}.
\end{equation*}
From the inequality $e^t \leq \frac{6}{t^3}$ for $t>0$ and the  assumption at the beginning of the proof, we conclude that 
\begin{equation}
\label{I12}
|I_1(k)|\leq  CM^2  \frac{1}{K^2 E ^{\frac{3}{2}}\Big (1-\pi (d+1)E^{-\frac{1}{4}}   \Big)^3}.   
\end{equation}
Hence, using \eqref{I1}, \eqref{I1i}, \eqref{boundI1},\eqref{I11}, and \eqref{I12} we obtain 
\begin{equation*}
\int _{\partial \Omega} \int ^{+\infty}_{-\infty} \omega ^2 |u(x,\omega)|^2 d\omega d\Gamma(x)=I_1(k)+ \int _{\partial \Omega} \int _{k<|\omega|} \omega ^2 |u(x,\omega)|^2 d\omega d\Gamma(x)
\leq
\end{equation*}
\begin{equation*}
 2\epsilon ^2 + C\Big( \frac{M^2}{K^2 E ^{\frac{3}{2}}}  + \frac{\parallel f_0 \parallel_{(3)}^2 + \parallel f_1 \parallel_{(2)}^2 }{1+K^{\frac{2}{3}}E^{\frac{1}{4}}} \Big).
\end{equation*}
Similarly, by using \eqref{I2},
\eqref{I2i}, \eqref{boundI2},
we have
\begin{equation*}
\int _{\partial \Omega} \int ^{+\infty}_{-\infty}  |\nabla u(x,\omega)|^2 d\omega d\Gamma(x)
\leq 2\epsilon ^2 + C\Big( \frac{M^2}{K^2 E ^{\frac{3}{2}}}  + \frac{\parallel f_0 \parallel_{(3)}^2 + \parallel f_1 \parallel_{(2)}^2 }{1+K^{\frac{4}{3}}E^{\frac{1}{2}}} \Big).
\end{equation*}
Using these two bounds and Lemma 4.2, we finally conclude
that 
\begin{equation*}
\parallel f_1 \parallel _{(0)} ^2 (\Omega) +\parallel f_0 \parallel _{(1)} ^2 (\Omega) \leq C \Big ( \parallel \partial _t U  \parallel _{(0)} ^2 (\partial \Omega \times (0,T))+ \parallel \nabla U \parallel _{(0)} ^2 (\partial \Omega \times (0,T))\Big )\leq
\end{equation*}
\begin{equation*}
 C \Big ( \parallel \partial _t U  \parallel _{(0)} ^2 (\partial \Omega \times (0,\infty))+ \parallel \nabla U \parallel _{(0)} ^2 (\partial \Omega \times (0,\infty))      \Big )=
\end{equation*}
\begin{equation*}
C \Big (     \int _{\partial \Omega} \int ^{+\infty}_{-\infty} \omega ^2 |u(x,\omega)|^2 d\omega d\Gamma(x)+   \int _{\partial \Omega} \int ^{+\infty}_{-\infty}  |\nabla u(x,\omega)|^2 d\omega d\Gamma(x)                                      \Big ) \leq
\end{equation*}
\begin{equation*}
 2\epsilon ^2 + C\Big( \frac{M^2}{K^2 E ^{\frac{3}{2}}}  + \frac{M^2}{1+K^{\frac{4}{3}}E^{\frac{1}{2}}} \Big),
\end{equation*}
due to the Parseval's identity and the assumption that $\parallel f_0 \parallel _{(0)} ^2 (\Omega) +\parallel f_1 \parallel _{(0)} ^2 (\Omega) \leq M$, $1\leq M$. Since $ K^{\frac{2}{3}} E ^{\frac{1}{4}}<K^2 E ^{\frac{3}{2}}$ for $1<K, 1<E$, the proof is complete.
\end{proof}

\section{Conclusion}

The next analytical issue is to obtain explicit constants $C$ in the stability estimates of Theorem \ref{maintheorem} for some simple but important domains $\Omega$,
 like a sphere or a cube. This seems to be quite realistic. Another possible development is to get these estimates when $(0,K)$ is replaced
by $(K_*,K)$ with, say, $K_*=K/2$. One expects  stability results to be extended onto more general elliptic operators in any case under non trapping (pseudo convexity) conditions 
in $\Omega$. The exact controllability theory for corresponding hyperbolic equations is developed in   \cite{T}.
The needed scattering theory is  also available, although not so transparent and explicit as for the Helmholtz equation.
In this more general case it is difficult to expect constants $C$ to be explicit.
We expect that the complete Cauchy data \eqref{Green} can be replaced by, say, only the Dirichlet data when $\Gamma=\partial \Omega$. Some particular results in 
this direction are obtained in
 \cite{LY}.

There is a numerical evidence of better resolution in the inverse source problem for larger $K$ \cite{CIL}, \cite{EV}, \cite{IK}, \cite{IL}: when typically in inverse problems for elliptic equations (at low wave numbers) with a realistic noise in the data one can
recover 5-8 parameters of 
unknown source, using wave numbers up to 100 one can
achieve a stable reconstruction of 30-50 parameters.
 It is important to collect further numerical evidence of  the increasing stability, in particular for more complicated geometries.
 
The numerical examples in \cite{IL1} suggest that increasing stability is possible without any (pseudo)convexity condition. This is a very interesting and
seemingly hard topic to investigate. Some results on the increasing stability of the continuation without convexity assumptions are obtained in \cite{I} and \cite{IK}.

It is challenging and important to consider recovery of the source term for the equation
\begin{equation*} 
(\Delta +k^2)u = -f_1 -ikf_0 \quad \text{in}\quad \Omega
\end{equation*}
in a bounded domain $\Omega$ when the radiation condition
\eqref{radiation} is replaced by one of boundary conditions
on $\partial\Omega$, like
\begin{equation*} 
\partial_{\nu}u = 0 \quad \text{on}\quad \partial\Omega.
\end{equation*}
The additional data can be
$u$ on $\Gamma\subset \partial
\Omega, K_*<k<K$. The solution $u$ is then a meromorphic function of $k$
(with poles at eigenvalues of the corresponding elliptic boundary value problem) and  a priori bounds for the direct problem are getting more complicated, but we expect that the main ideas can be properly adjusted to this case.

{\bf Acknowledgment:} 

 This research is supported in part by the Emylou Keith and Betty Dutcher Distinguished Professorship and the NSF grant DMS 15-14886.

\end{document}